\documentclass[pdflatex,11pt]{article}
\usepackage[T1]{fontenc}
\usepackage[applemac]{inputenc}
\usepackage[english]{babel}
\usepackage{geometry}
\usepackage{amsfonts,amsmath,amssymb,amsthm}
\usepackage{lmodern}
\usepackage{graphicx}
\usepackage{enumerate}
\usepackage{url}

\makeatletter
\newtheorem*{rep@theorem}{\rep@title}
\newcommand{\newreptheorem}[2]{%
\newenvironment{rep#1}[1]{%
 \def\rep@title{#2 \ref{##1}}%
 \begin{rep@theorem}}%
 {\end{rep@theorem}}}
\makeatother

\newtheorem{theo}{Theorem}[section]
\newreptheorem{theo}{Theorem}
\newtheorem{prop}[theo]{Proposition}
\newtheorem{lem}[theo]{Lemma}
\newtheorem{cor}[theo]{Corollary}
\newtheorem*{conj}{Conjecture}

\theoremstyle{definition}
\newtheorem{defi}[theo]{Definition}

\newtheorem*{ack}{Acknowledgments}

\theoremstyle{remark}

\newtheorem*{rem*}{Remark}

\newcommand\C{\mathbb C}
\newcommand\B{\mathcal B}

\newcommand\tra[1]{\beta_{#1}}
\newcommand\trak{\tra{k}}
\newcommand\trb[1]{\tilde\beta_{#1}}
\newcommand\trbk{\trb{k}}
\newcommand\tdelta{\tilde\delta}
\newcommand\atome[2]{\alpha_{#1 #2}}
\newcommand\init[1]{S(#1)}
\newcommand\final[1]{F(#1)}
\newcommand\disque[1]{D_{#1}}
\DeclareMathOperator\mcg{MCG}

\newcommand\orb{\mathcal O}

\author{Sandrine Caruso}
\title{A family of pseudo-Anosov braids whose super summit sets grow exponentially}
\date{}

\begin{document}

\maketitle


\begin{abstract}
We prove that the size of the super summit set of a braid can grow exponentially with the canonical length of the braid, even for pseudo-Anosov braids.
\end{abstract}

\section*{Introduction}

When trying to find a polynomial time solution to the conjugacy problem in braid groups, one strategy is to use Garside theory in order to associate to any given braid a certain finite subset of its conjugacy class. This subset should only depend on the conjugacy class of the given braid, and it should be effectively computable. Among the invariants of this type, the super summit set, introduced by El-Rifai and Morton in \cite{EM}, is one of the most famous. More recently, in \cite{GGM}, Gebhardt and Gonz\'alez-Meneses have introduced a new invariant, the set of sliding circuits, which is a subset of the super summit set. 

Given two elements of the braid group, the strategy for deciding if they are conjugate is then in two steps: first, for each of the two braids, find one element of the invariant subset of its conjugacy class; then, calculate the complete invariant subset of one of the two in order to check whether it contains the other element. 

%

There does exist a polynomial time algorithm which, given a braid, finds an element of its super summit set \cite{BKL}. However, determining whether two such elements are in the same super summit set seems to be much more difficult to do quickly. Indeed, in general, the size of the super summit set of a braid is not bounded by a polynomial in the length of the braid:
in \cite{GM}, Gonz\'alez-Meneses constructs a family of reducible braids whose super summit set grows exponentially, both with the length of the braid and with the number of strings. In fact, even the sets of sliding circuits of his braids grow exponentially.

One might have hoped that this bad behavior is specific to reducible braids. However, in \cite{prasolov}, Prasolov presents a family of pseudo-Anosov braids whose ultra summit set, another subset of the super summit set, grows exponentially with the number of strings of the braid. Here, inspired by the construction of Gonz\'alez-Meneses in \cite{GM}, we 
present a family of pseudo-Anosov braids with 5 strings whose super summit set grows exponentially in the length of the braids. 
\begin{reptheo}{theo:exp}
Let $k \geqslant 1$. The braid
\[\trak =(\sigma_2\sigma_1)^{3k+1}\sigma_4^{2k+2} \sigma_3\sigma_4^{2k-1}\]
is pseudo-Anosov and has a super summit set whose cardinality is at least $2^{2k-2}$.
\end{reptheo}
It is interesting to note that, on the other hand, the set of sliding circuits of this family of braids is as small as possible (Lemma \ref{lem:orbite}). This is relevant to a conjecture, well known to the specialists of the domain:
\begin{conj}
For all $n$, there exists a polynomial $P_n$ such that, for every pseudo-Anosov braid~$x$ with $n$~strings and with canonical length $\ell$, the cardinality of the set of sliding circuits of~$x$ is at most $P_n(\ell)$.
\end{conj}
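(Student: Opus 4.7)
The theorem makes two claims, which I would prove independently: first, that $\trak$ is pseudo-Anosov, and second, that its super summit set contains at least $2^{2k-2}$ distinct elements.

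\textbf{Pseudo-Anosov.} The plan is to apply the Nielsen--Thurston classification on the five-punctured disk. To rule out periodicity, compare the induced permutation of $\trak$ and its exponent sum ($10k+4$) against the possibilities for conjugates of powers of $\delta = \sigma_1\sigma_2\sigma_3\sigma_4$ or $\gamma = \delta\sigma_1$, which are representatives of the two periodic classes in $B_5$; when these numerical conditions alone do not suffice, a geometric argument (linking numbers of strands, or direct inspection of a power) closes the case. To rule out reducibility, I would enumerate the topological types of essential multicurves on the five-punctured disk and verify that $\trak$ fixes none up to isotopy. The key observation is that, since $\sigma_4$ commutes with $\langle\sigma_1,\sigma_2\rangle$, one may rewrite $\trak = \sigma_4^{2k+2}(\sigma_2\sigma_1)^{3k+1}\sigma_3\sigma_4^{2k-1}$, which displays a strongly twisting $3$-braid on strands $\{1,2,3\}$ glued through a single $\sigma_3$ to a large power of $\sigma_4$ on strands $\{3,4,5\}$. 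The sandwiched $\sigma_3$ prevents an invariant curve from separating strand $3$ from $\{4,5\}$, and the twisting in $(\sigma_2\sigma_1)^{3k+1}$ excludes invariant curves supported on $\{1,2,3\}$.

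\textbf{SSS lower bound.} Following the template of Gonz\'alez-Meneses in \cite{GM}, I would first put $\trak$ in left normal form with respect to the Garside structure of $B_5$ and record its infimum and supremum; SSS-membership of a conjugate then reduces to matching those two integers. Next I would build a family of conjugating braids $\{w_\epsilon\}_{\epsilon\in\{0,1\}^{2k-2}}$, in which each bit $\epsilon_i$ triggers an independent local choice on a disjoint segment of the braid (schematically, whether to slide one $\sigma_4$-block across the central $\sigma_3$ or to leave it in place). Setting $x_\epsilon := w_\epsilon^{-1}\trak w_\epsilon$, the proof amounts to two verifications: (i) the infimum and supremum of $x_\epsilon$ equal those of $\trak$, so $x_\epsilon$ lies in the super summit set; and (ii) distinct $\epsilon$ yield distinct $x_\epsilon$, which should be witnessed by a binary encoding readable off the normal form of $x_\epsilon$ (each bit $\epsilon_i$ corresponding to the position of a specific letter in a designated segment).

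\textbf{Main obstacle.} The delicate step is verification (i): one must track the left normal form of $\trak$ through each of $2^{2k-2}$ binary choices and prove that no cancellation shortens the canonical length. The right strategy is to reduce this to a short list of purely local rewriting moves on adjacent Garside factors, each of which can be checked by hand and is independent of $k$. Once these local moves are under control, both SSS-membership and pairwise distinctness follow uniformly in $\epsilon$, and the lower bound $|\mathrm{SSS}(\trak)|\geqslant 2^{2k-2}$ is immediate.
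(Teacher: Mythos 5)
There is a fundamental mismatch here: the statement you were asked to prove is the \emph{Conjecture} — that for every $n$ there exists a polynomial $P_n$ bounding the cardinality of the set of sliding circuits (SC) of every pseudo-Anosov $n$-strand braid in terms of its canonical length. This is not a result of the paper at all; it is stated explicitly as an open conjecture, ``well known to the specialists of the domain,'' and the paper offers no proof of it. Your proposal instead outlines a proof of Theorem \ref{theo:exp}, which concerns a different invariant (the super summit set, not the set of sliding circuits) and points in the opposite direction (an exponential \emph{lower} bound for one specific family, not a polynomial \emph{upper} bound for all pseudo-Anosov braids).

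Even leaving the mismatch aside, no argument of the kind you sketch could establish the conjecture: exhibiting a single family of braids $\trak$ and computing its invariants can never yield a universal polynomial bound quantified over \emph{all} pseudo-Anosov braids with $n$ strings; such a proof would require a uniform structural argument about $\mathrm{SC}(x)$ for arbitrary pseudo-Anosov $x$, which nobody currently has. What the paper actually contributes on this front is evidence \emph{consistent} with the conjecture: Lemma \ref{lem:orbite} shows that the SC of $\trbk$ is a single orbit under cycling and conjugation by $\Delta$ — as small as it can possibly be — even though the SSS of the very same braid grows exponentially (Theorem \ref{theo:exp}). So the conjecture remains open; your outline, whatever its merits as a sketch of Theorem \ref{theo:exp} (and it does roughly follow the Gonz\'alez-Meneses template of \cite{GM} that the paper also uses, though the paper rules out reducibility via rigidity, Lemma \ref{lem:orbite}, and round-curve arguments rather than by enumerating multicurve types directly), does not bear on the statement in question.
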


\begin{ack}I would like to thank my PhD advisor Bert Wiest for his help and guidance.
\end{ack}

\section{Definitions}

\subsection{Braids and mapping class group of the punctured disk}

\begin{defi}[Mapping class group of the punctured disk]
Let $\disque n$ be the closed unit disk in $\C$, with $n$ punctures regularly spaced on the real axis. The \emph{mapping class group} of $\disque n$, denoted $\mcg(\disque n)$, is the group of the homeomorphisms of $\disque n$, modulo the isotopy relation. We also denote $\mcg(\disque n, \partial \disque n)$ the group of the homeomorphisms of $\disque n$ fixing pointwise the boundary $\partial \disque n$ of $\disque n$, modulo the isotopy relation.
\end{defi}

The Artin braid group with $n$ strings is isomorphic to the group $\mcg(\disque n, \partial \disque n)$.

Recall that the classification theorem of Nielsen and Thurston states that a mapping class $f \in \mcg(\disque n)$ is either periodic, or reducible, or pseudo-Anosov. A braid $x \in \mcg(\disque n, \partial \disque n)$ can be projected on an element of $\mcg(\disque n)$. We call \emph{Nielsen-Thurston type of $x$} the Nielsen-Thurston type of its projection. The definition of periodicity is then transformed as follows: a braid $x \in \B_n$ is periodic if and only if there exist nonzero integers $m$ and $l$ such that $x^m = \Delta^l$, where $\Delta = (\sigma_1 \cdots \sigma_{n-1})(\sigma_1\cdots\sigma_{n-2}) \cdots (\sigma_1\sigma_2)\sigma_1$. (Geometrically $\Delta$ corresponds to the half-twist around the boundary of the disk).

\subsection{Garside structure and invariants of conjugacy classes}

A general introduction to Garside theory can be found in \cite{DDGM}. We shall only recall some facts which are useful for our purposes.

While the group $\B_n$ admits the well-known presentation of groups
\[\B_n = \left< \sigma_1, \ldots, \sigma_{n-1}\ ;\ \sigma_i \sigma_{i+1} \sigma_i = \sigma_{i+1}\sigma_i \sigma_{i+1} \text{ and } \sigma_i \sigma_j = \sigma_j \sigma_i \text{ for } |i-j| \geqslant 2 \right>,\]
the \emph{monoid of positive braids} $\B_n^+$, which is embedded in $\B_n$, is defined by the same presentation, interpreted as a presentation of monoids.

For $m \leqslant n$, we denote $\Delta_m$ the element of $\B_n^+$ defined by
\[\Delta_m = (\sigma_1 \cdots \sigma_{m-1})(\sigma_1\cdots\sigma_{m-2}) \cdots (\sigma_1\sigma_2)\sigma_1\]
and we will denote $\Delta = \Delta_n \in \B_n^+$.

The pair $(\B_n^+,\Delta)$ defines what we call a Garside structure on $\B_n$. Without giving the complete definition, here are some properties of such a structure. The group $\B_n$ is endowed with an order $\preccurlyeq$ defined by $x \preccurlyeq y \Leftrightarrow x^{-1}y \in \B_n^+$. If $x \preccurlyeq y$, we say that $x$ is a \emph{prefix} of $y$. Any two elements $x,y$ of $\B_n$ have a unique greatest common prefix.

We also define $\succcurlyeq$ by $x \succcurlyeq y \Leftrightarrow x y^{-1} \in \B_n^+$. Note that $x \succcurlyeq y$ is not equivalent to $y \preccurlyeq x$.

The elements of the set $\{x \in \B_n, 1 \preccurlyeq x \preccurlyeq \Delta\}$ are called \emph{simple braids}.

\begin{defi}[left-weighting]
Let $s_1$, $s_2$ be two simple braids in $\B_n$. We say that $s_1$ and $s_2$ are \emph{left-weighted} if there does not exist any generator $\sigma_i$ such that $s_1 \sigma_i$ and $\sigma_i^{-1} s_2$ are both still simple.
\end{defi}

\begin{prop}
Let $x \in \B_n$. There exists a unique decomposition $x = \Delta^p x_1\cdots x_r$ such that $x_1, \ldots,x_r$ are simple braids, distincts from $\Delta$ and $1$, and such that $x_i$ and $x_{i+1}$ are left-weighted for all $i = 1, \ldots, r-1$.
\end{prop}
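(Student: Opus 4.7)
The plan is to prove existence by a greedy left-to-right extraction procedure, and uniqueness by appealing to a standard characterization: left-weightedness of $(s_1,s_2)$ forces $s_1$ to be the greatest simple prefix of $s_1 s_2$. I would first reduce to the positive case: since $\Delta^{-1}\sigma_i\Delta = \sigma_{\tau(i)}$ for an appropriate permutation $\tau$, every $x \in \B_n$ may be written as $\Delta^{-N}y$ with $y \in \B_n^+$ for some $N \geqslant 0$, and normalizing $y$ while absorbing the negative $\Delta^{-N}$ into the final $\Delta^p$ gives the desired form.

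For existence on $y \in \B_n^+ \setminus \{1\}$, I would iterate the following greedy step, justified by the lattice structure of $(\B_n^+, \preccurlyeq)$: set $x_1$ to be the greatest common prefix of $y$ and $\Delta$, which exists and is simple. If $x_1 = \Delta$, pull this $\Delta$ factor out to the left, commuting it past any already-produced factors via $\Delta \sigma_i = \sigma_{\tau(i)} \Delta$ (which preserves simplicity), and recurse. Otherwise $x_1$ is a proper simple braid, we write $y = x_1 y_1$ with $y_1 \in \B_n^+$ of strictly smaller word length, and recurse on $y_1$. The procedure terminates. The left-weighted condition on consecutive pairs $(x_i, x_{i+1})$ is then automatic from the greedy choice: if some $\sigma_j$ satisfied both $\sigma_j \preccurlyeq x_{i+1}$ and $x_i \sigma_j$ simple, then $x_i \sigma_j$ would be a simple prefix of the remainder $x_i x_{i+1} \cdots x_r$ at step $i$, strictly exceeding $x_i$, contradicting its maximality.

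For uniqueness, suppose $x = \Delta^p x_1 \cdots x_r$ is any decomposition satisfying the conditions. By the standard Garside lemma that $(x_i, x_{i+1})$ left-weighted implies $x_i$ is the greatest simple prefix of $x_i x_{i+1}$ --- and hence, by induction along the chain of left-weighted pairs, of the whole tail $x_i x_{i+1} \cdots x_r$ --- the exponent $p$ must equal the largest integer $k$ with $\Delta^k \preccurlyeq x$ (an excess $\Delta$ factor would force $x_1 = \Delta$, which is excluded), and then $x_1$ must equal the greatest simple prefix of $\Delta^{-p} x$. Applying the same argument to $x_2 \cdots x_r$ determines $x_2$, and so on, so the whole decomposition is forced.

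The main obstacle is proving the key ``maximal prefix'' lemma itself: that $s \preccurlyeq x_1 x_2$ with $s$ simple and $(x_1, x_2)$ left-weighted implies $s \preccurlyeq x_1$. I would argue by contradiction, setting $t = s \wedge x_1$ and analyzing the first generator $\sigma_j$ with $t \sigma_j \preccurlyeq s$; pushing this $\sigma_j$ through using the balanced nature of $\Delta$ and the lattice properties, one extracts a generator witnessing both $x_1 \sigma_j$ simple and $\sigma_j \preccurlyeq x_2$, contradicting left-weightedness. This is the one genuinely technical piece of Garside theory underlying the proposition, and everything else reduces to bookkeeping around the $\Delta^p$ factor.
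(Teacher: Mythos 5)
The paper does not actually prove this proposition: it is recalled as a standard fact of Garside theory (going back to El-Rifai--Morton \cite{EM}, see also \cite{DDGM}), so there is no in-paper argument to compare yours against. Your proposal is, in substance, the standard textbook proof and is sound: reduce to a positive braid by extracting $\Delta^{-N}$, greedily peel off $x_i = \Delta \wedge (\text{remaining tail})$, observe that maximality of each greedy factor forces consecutive pairs to be left-weighted, and derive uniqueness from the converse statement that left-weightedness of $(x_i,x_{i+1})$ makes $x_i$ the greatest simple prefix of the tail. Two small points where your sketch could be tightened. First, for the key lemma ($s$ simple, $s \preccurlyeq x_1x_2$, $(x_1,x_2)$ left-weighted $\Rightarrow s \preccurlyeq x_1$), the cleanest route is not via $t = s \wedge x_1$ but via the join: $w = s \vee x_1$ is simple (both lie below $\Delta$) and $w \preccurlyeq x_1x_2$, so writing $w = x_1c$ gives $c \preccurlyeq x_2$ with $x_1c$ simple; if $s \not\preccurlyeq x_1$ then $c \neq 1$ and any generator $\sigma_j \preccurlyeq c$ violates left-weightedness. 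Second, the inductive step extending ``greatest simple prefix of $x_ix_{i+1}$'' to ``greatest simple prefix of $x_i\cdots x_r$'' deserves a line of its own (the same join argument does it), since that is what lets you pin down $p$ and then each factor in turn. With those two points filled in, your argument is complete and is exactly the proof the paper is implicitly relying on.
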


\begin{defi}[left normal form]
In the previous proposition, the writing $x = \Delta^p x_1 \cdots x_r$ is called the \emph{left normal form} of $x$, $p$ is called the \emph{infimum} of $x$ and is denoted by $\inf x$, $p+r$ is the \emph{supremum} of $x$ and is denoted by $\sup x$, and $r$ is called the \emph{canonical length} of $x$.

Furthermore, we denote by $\iota(x) = \Delta^{-p} x_1 \Delta^p$ the \emph{initial factor} of $x$ ($\iota(x) = x_1$ if $p$ is even, $\iota(x) = \Delta^{-1} x_1 \Delta$ if $p$ is odd), and $\phi(x) = x_r$ its \emph{final factor}.
\end{defi}

\begin{defi}[super summit set]
Let $x \in \B_n$. We call \emph{super summit set} of $x$ (abbreviated SSS) the set of the conjugates of $x$ with the minimal canonical length.
\end{defi}

Obviously, if $x$ and $y$ are conjugate, they have the same super summit set: thus the super summit set is an invariant of the conjugacy class.

\begin{defi}[cycling, decycling]
Let $x \in \B_n$, and let $x = \Delta^p x_1\cdots x_r$ be its normal form. If $r \geqslant 1$, we define
\begin{itemize}
\item the \emph{cycling} of $x$ by $\iota(x)^{-1} x \iota(x) = \Delta_p x_2 \cdots x_r x_1'$,
\item the \emph{decycling} of $x$ by $\phi(x) x \phi(x)^{-1} = \Delta_p x_r' x_1 \cdots x_{r-1}$,
\end{itemize}
where $x_1' = \iota(x) = x_1$ or $\Delta^{-1} x_1 \Delta$, and $x_r' = \phi(x) = x_r$ or $\Delta^{-1}x_r \Delta$, depending on the parity of $p$. If $r=0$, the cycling and the decycling of $x$ are equal to $x$ itself.
\end{defi}

An interesting property of the cycling and decycling operations is that they preserve the SSS. Moreover, from any braid, we can obtain an element of its SSS by applying a finite number of cyclings and decyclings \cite{EM}.

Another type of conjugation is the \emph{cyclic sliding} \cite{GGM}.
\begin{defi}[cyclic sliding]
We call \emph{preferred prefix} of $x$ the greatest common prefix of $\iota(x)$ and $\partial\phi(x) = \phi(x)^{-1}\Delta$. The \emph{cyclic sliding} is defined as the conjugation by the preferred prefix.
\end{defi}

The cyclic sliding also preserves the super summit set.

The operation of cyclic sliding is ultimately periodic, because it does not increase the length of the braid. This allows us to define another invariant of conjugacy classes \cite{GGM}:
\begin{defi}[set of sliding circuits]
We call \emph{set of sliding circuits} of a braid $x$ (abbreviated SC) the set of the conjugates of $x$ that are periodic points of the cyclic sliding.
\end{defi}
The SC of $x$ is a subset of its SSS \cite{GGM}.

\begin{defi}[rigidity]
A braid $x$ is said to be \emph{rigid} if $\phi(x)$ and $\iota(x)$ are left-weighted, that is, if the cyclic sliding of $x$ is equal to $x$.
\end{defi}

A rigid braid, by definition, belongs necessarily to its SC, since it is a periodic point of period $1$ of the cyclic sliding operation.

\section{A family of pseudo-Anosov braids}\label{sec:pA}

Let $k \geqslant 2$ be an integer. Let us define the following braid with $5$ strings:
\[\trak = \delta_3^{3k+1}\sigma_4^{2k+2} \sigma_3\sigma_4^{2k-1}\]
where $\delta_3 = \sigma_2\sigma_1$. Recall that we also have $\Delta_3 = \sigma_2\sigma_1\sigma_2$.

\begin{lem}
The left normal form of $\trak$ is given by the following factorisation
\[\trak = (\Delta_3\sigma_4)^{2k}(\delta_3 \sigma_4 \sigma_3 \sigma_4)(\sigma_3\sigma_4)(\sigma_4)^{2k-3}.\]
In particular, $\inf \trak = 0$ and $\sup \trak = 4k-1$.
\end{lem}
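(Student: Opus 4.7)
The plan is to verify the proposed equality and then check that the right-hand side is the left normal form, meaning that each factor is simple and distinct from $1$ and $\Delta$, and that each consecutive pair is left-weighted. For the algebraic identity, I would use that $\sigma_4$ commutes with $\sigma_1$ and $\sigma_2$, hence with both $\delta_3$ and $\Delta_3$, so $(\Delta_3\sigma_4)^{2k} = \Delta_3^{2k}\sigma_4^{2k}$. Combined with the identity $\delta_3^3 = \Delta_3^2$ in $\B_3$ and the braid relation $\sigma_4\sigma_3\sigma_4 = \sigma_3\sigma_4\sigma_3$, a short manipulation reduces the proposed product to $\delta_3^{3k+1}\sigma_4^{2k+2}\sigma_3\sigma_4^{2k-1} = \trak$. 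The key computation is $(\sigma_4\sigma_3\sigma_4)(\sigma_3\sigma_4) = \sigma_4(\sigma_3\sigma_4\sigma_3)\sigma_4 = \sigma_4^2\sigma_3\sigma_4^2$, after which the factors $\sigma_4^{2k}$, $\sigma_4^2$ and $\sigma_4^{2k-3}$ combine to give the needed power of $\sigma_4$.

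Next, I would verify that each of the four distinct factors $A = \Delta_3\sigma_4$, $B = \delta_3\sigma_4\sigma_3\sigma_4$, $C = \sigma_3\sigma_4$, and $\sigma_4$ is simple and lies strictly between $1$ and $\Delta$. For each factor it suffices to compute the image permutation in $S_5$ and observe that its number of inversions equals the word length of the given positive expression, and to note that none of these permutations is the reversal of $(1,2,3,4,5)$, which carries $10$ inversions.

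The main step is the verification of left-weightedness of each of the five consecutive pairs that appear in the proposed factorisation, namely $(A,A)$, $(A,B)$, $(B,C)$, $(C,\sigma_4)$ and $(\sigma_4,\sigma_4)$. I would use the standard criterion that $(s_1, s_2)$ is left-weighted if and only if $S(s_2) \subseteq F(s_1)$, where $S(s)$ and $F(s)$ denote the sets of indices $i$ for which $\sigma_i$ is respectively a left and a right divisor of $s$, read off as the descent sets of the permutation and of its inverse. A direct computation gives
\[S(A) = F(A) = \{1,2,4\},\ S(B) = \{2,4\},\ F(B) = \{1,3,4\},\ S(C) = F(C) = \{4\},\ S(\sigma_4) = F(\sigma_4) = \{4\},\]
and the five required inclusions are immediate. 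Once the normal form is established, the statements on the infimum and supremum follow: $\inf\trak = 0$ because the first factor is a nontrivial simple braid different from $\Delta$, and $\sup\trak$ is just the total number of simple factors, $2k + 1 + 1 + (2k-3) = 4k-1$. The only mildly tedious step is the left-weightedness check, but it is a finite and entirely routine calculation in $S_5$.
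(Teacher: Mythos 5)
Your proposal is correct and takes essentially the same route as the paper, whose proof simply asserts that one ``easily checks'' the product identity (via $\Delta_3^2=\delta_3^3$ and the commutation of $\sigma_4$ with $\sigma_1,\sigma_2$), the simplicity of the factors, and the left-weightedness of consecutive pairs; you supply exactly those checks with the standard criterion $\init{s_2}\subseteq \final{s_1}$. One small slip: $\init{\sigma_3\sigma_4}=\{3\}$, not $\{4\}$ (while $\final{\sigma_3\sigma_4}=\{4\}$ is right), but since $\{3\}\subseteq \final{B}=\{1,3,4\}$ the left-weightedness of the pair $(B,C)$ still holds and no conclusion is affected.
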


\begin{proof}
We easily check that this writing is indeed equal to $\trak$ (recall that $\Delta_3^2 = \delta_3^3$), that each of these factors is a simple braid, and that two successive factors are left-weighted. (See Figure \ref{fig:trak}.)
\end{proof}

\begin{figure}
\centering
\includegraphics[width=4cm]{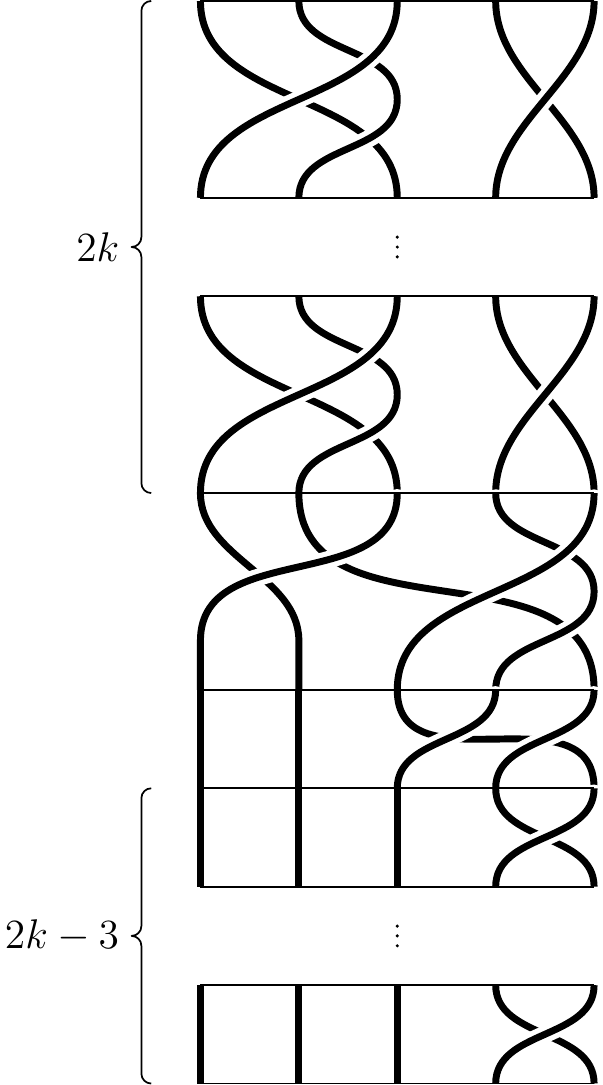}
\caption{Left normal form of the braid $\trak$.}
\label{fig:trak}
\end{figure}

Let us also consider the braid
\[\trbk = (\sigma_1\sigma_3)^{2k-2}(\sigma_3\sigma_4\Delta_3)(\Delta_3\sigma_4)[(\delta_3\sigma_4)(\tdelta_3\sigma_4)]^{k-1}(\delta_3 \sigma_4 \sigma_3 \sigma_4),\]
where $\tdelta_3 = \sigma_1 \sigma_2$. (See Figure \ref{fig:trbk}.)

\begin{figure}
\centering
\includegraphics[width=4cm]{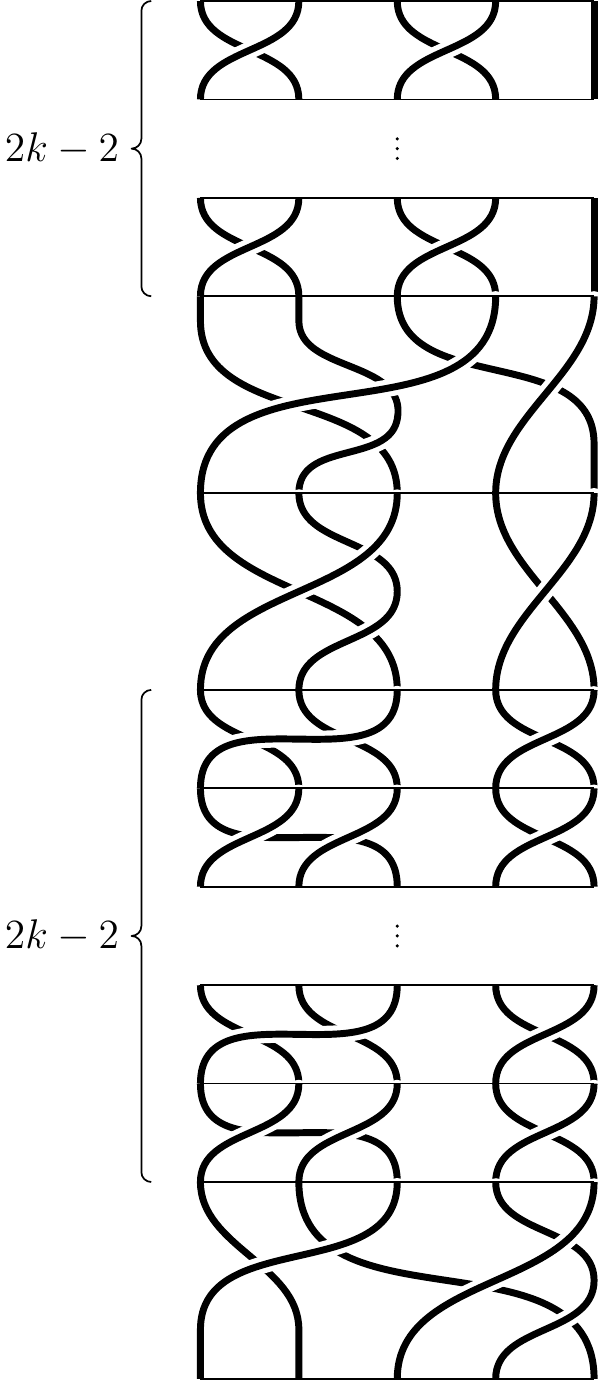}
\caption{The braid $\trbk$ in left normal form}
\label{fig:trbk}
\end{figure}

\begin{lem}\label{lem:trbk}
The braid $\trbk$ is conjugate to $\trak$, and, furthermore, it is rigid and in its SC. Moreover, $\inf \trbk = 0 = \inf \trak$ and $\sup \trbk = 4k-1 = \sup \trak$.
\end{lem}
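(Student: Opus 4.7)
The lemma bundles four assertions: $\inf\trbk = 0$, $\sup\trbk = 4k-1$, rigidity, and conjugacy with $\trak$. The first three all follow from recognising the displayed product for $\trbk$ as its left normal form; the last is the substantive part.

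For the normal form, I would count the simple factors in the displayed product: $(2k-2)+1+1+2(k-1)+1 = 4k-1$, matching the desired canonical length. Each of the distinct simple factors appearing --- $\sigma_1\sigma_3$, $\sigma_3\sigma_4\Delta_3$, $\Delta_3\sigma_4$, $\delta_3\sigma_4$, $\tdelta_3\sigma_4$, $\delta_3\sigma_4\sigma_3\sigma_4$ --- is visibly a simple braid distinct from $1$ and $\Delta = \Delta_5$, as a glance at its strand diagram confirms. The bulk of the work is the left-weightedness of each type of consecutive pair, tested via the standard finishing-set/starting-set criterion. Only a handful of pair types need to be checked; the most delicate are $(\delta_3\sigma_4)\mid(\tdelta_3\sigma_4)$ and $(\tdelta_3\sigma_4)\mid(\delta_3\sigma_4)$, which are exactly what forces the alternation between $\delta_3$ and $\tdelta_3$ in the central block. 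Once these pass, the product is the left normal form, and $\inf\trbk = 0$, $\sup\trbk = 4k-1$ follow.

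Rigidity is the single additional left-weightedness check on the pair $(\phi(\trbk),\iota(\trbk)) = (\delta_3\sigma_4\sigma_3\sigma_4,\sigma_1\sigma_3)$. Once confirmed, $\trbk$ being in its SC is immediate from the remark after the definition of rigidity.

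The conjugacy with $\trak$ is the main obstacle. The strategy is to produce an explicit positive conjugator $c$ with $c^{-1}\trak c = \trbk$, or equivalently a sequence of elementary conjugations --- cyclings or cyclic slidings --- transforming $\trak$ into $\trbk$; since each such step preserves the super summit set, staying inside it is automatic. Comparing Figures \ref{fig:trak} and \ref{fig:trbk} suggests that $\trbk$ arises from $\trak$ by dragging portions of the leading $(\Delta_3\sigma_4)^{2k}$ block around to the right end of the braid and re-normalising. The key algebraic identities are $\Delta_3^2 = \delta_3^3$, $\Delta_3^{-1}\delta_3\Delta_3 = \tdelta_3$, and the commutations $\sigma_i\sigma_4 = \sigma_4\sigma_i$ for $i\in\{1,2\}$; applying them successively converts one normal form into the other and, in particular, produces the alternation between $\delta_3\sigma_4$ and $\tdelta_3\sigma_4$ in the centre of $\trbk$. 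The verification reduces to a finite but intricate braid calculation in $\B_5^+$, and the genuine difficulty lies in the bookkeeping of the accumulating conjugator through these successive normalisations.
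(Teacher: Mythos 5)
Your overall architecture matches the paper's: verify that the displayed product is the left normal form (simplicity of each factor plus left-weightedness of each type of consecutive pair, including the pair $(\delta_3\sigma_4\sigma_3\sigma_4,\sigma_1\sigma_3)$ for rigidity), read off $\inf$, $\sup$ and membership in the SC, and establish conjugacy via an explicit conjugator. Your factor count $(2k-2)+1+1+2(k-1)+1=4k-1$ and your identification of the delicate pairs are correct.

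The gap is in the conjugacy part: you correctly identify that one must exhibit a conjugator, and you correctly guess that it arises from cycling-type operations dragging the leading block to the right, but you never actually produce it --- you end by deferring the ``finite but intricate braid calculation.'' That calculation \emph{is} the content of this part of the lemma; without a concrete $c$ (or a concrete finite sequence of cyclings/slidings), the claim that $\trbk$ is conjugate to $\trak$ is unproven, and nothing in the rest of the paper lets you recover it indirectly. The paper's own proof is equally terse about the verification, but it does supply the explicit positive conjugator
\[\tau = (\Delta_3\sigma_4)^{2k}(\delta_3 \sigma_4 \sigma_3 \sigma_4)(\sigma_3\sigma_4\Delta_3)(\Delta_3\sigma_4)^{2k-1}(\delta_3 \sigma_4 \sigma_3 \sigma_4),\]
with $\trbk = \tau^{-1}\trak\tau$; note that its leading segment $(\Delta_3\sigma_4)^{2k}(\delta_3\sigma_4\sigma_3\sigma_4)$ consists of the first $2k+1$ normal-form factors of $\trak$, which is consistent with your ``drag the front around to the back'' heuristic, but the remaining segment is not something your sketch determines. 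To complete your argument you would need to write down such a $\tau$ and check the identity $\trak\tau = \tau\trbk$ in $\B_5^+$ using exactly the relations you list ($\Delta_3^2=\delta_3^3$, $\Delta_3^{-1}\delta_3\Delta_3=\tdelta_3$, and $\sigma_i\sigma_4=\sigma_4\sigma_i$ for $i\in\{1,2\}$).
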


This immediately implies the
\begin{cor}
The braid $\trak$ belongs to its SSS.
\end{cor}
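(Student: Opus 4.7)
The plan is to deduce the corollary directly from Lemma \ref{lem:trbk} together with the definition of the super summit set. Recall that the canonical length of a braid $x$ is $\sup x - \inf x$. From Lemma \ref{lem:trbk}, we have $\inf \trbk = 0 = \inf \trak$ and $\sup \trbk = 4k-1 = \sup \trak$, so both $\trak$ and $\trbk$ have canonical length exactly $4k-1$.

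Next I would invoke the following chain: because $\trbk$ is rigid, its cyclic sliding equals itself, so $\trbk$ lies in its own SC; but the SC is contained in the SSS (as recalled in the excerpt after the definition of SC), so $\trbk$ lies in its SSS. In particular, the minimal canonical length over the conjugacy class of $\trbk$ equals the canonical length of $\trbk$, namely $4k-1$. Since $\trak$ is conjugate to $\trbk$, the two braids share the same conjugacy class and hence the same SSS, and $4k-1$ is also the minimal canonical length in the conjugacy class of $\trak$. As $\trak$ itself has canonical length $4k-1$, it realises this minimum and therefore belongs to its SSS.

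There is essentially no obstacle here: the entire content of the corollary is packaged inside Lemma \ref{lem:trbk}, and the argument reduces to comparing canonical lengths and applying the inclusion $\mathrm{SC} \subseteq \mathrm{SSS}$. The only thing to be a bit careful about is to note explicitly that ``$\trbk$ is in its SC'' implies that the minimum of the canonical length over the conjugacy class is attained at $\trbk$, so that the equality of canonical lengths then forces $\trak$ into the SSS as well.
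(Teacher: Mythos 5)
Your argument is correct and is essentially the paper's own proof, merely spelled out in more detail: the paper likewise observes that $\trak$ has the same canonical length as its conjugate $\trbk$, which lies in its SC and hence in its SSS, so the minimal canonical length in the conjugacy class is attained by $\trak$. No issues.
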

Indeed, $\trak$ has the same canonical length as one of its conjugates that lies in its SC, and thus in its SSS.

\begin{proof}[Proof of Lemma \ref{lem:trbk}]
The braid $\trbk$ is obtained by conjugating $\trak$ by
\[\tau = (\Delta_3\sigma_4)^{2k}(\delta_3 \sigma_4 \sigma_3 \sigma_4)(\sigma_3\sigma_4\Delta_3)(\Delta_3\sigma_4)^{2k-1}(\delta_3 \sigma_4 \sigma_3 \sigma_4),\]
\emph{i.e.} $\trbk = \tau^{-1} \trak \tau$.
Then, the left normal form of $\trbk$ is
\[\trbk = (\sigma_1\sigma_3)^{2k-2}(\sigma_3\sigma_4\Delta_3)(\Delta_3\sigma_4)[(\delta_3\sigma_4)(\tdelta_3\sigma_4)]^{k-1}(\delta_3 \sigma_4 \sigma_3 \sigma_4).\]
We observe that this braid is rigid, and consequently, belongs to the SC of $\trak$. Furthermore, we have $\inf \trbk = 0$ and $\sup \trbk = 4k-1$.
\end{proof}

\subsection{The SC has only one orbit under cycling or conjugation by $\Delta$}

\begin{lem}\label{lem:orbite}
The SC of $\trak$ is reduced to the orbit of $\trbk$ under the operation of cycling or conjugation by $\Delta$.
\end{lem}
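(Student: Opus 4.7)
The plan is to prove two inclusions. The easier direction, that the orbit of $\trbk$ under cycling and conjugation by $\Delta$ is contained in the SC of $\trak$, follows from invariance properties: the SC is preserved by cycling (which preserves canonical length and which maps rigid braids to rigid braids) and by conjugation by $\Delta$ (since $\Delta^2$ is central, conjugation by $\Delta$ is a length-preserving automorphism of $\B_5$ that preserves the Garside structure). Combined with Lemma \ref{lem:trbk}, this gives the inclusion.

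For the reverse inclusion, let $y$ lie in the SC of $\trak$, so $y$ has infimum $0$ and canonical length $4k-1$. I would first argue that every element of the SC must be rigid in this case: because the sliding circuit of $y$ stays inside the SC and the normal form of $\trbk$ already has each pair of adjacent factors left-weighted in a strong sense, a short argument using the preferred prefix should show that no non-rigid braid of canonical length $4k-1$ survives as a sliding periodic point in this conjugacy class. Once this reduction is in place, the problem becomes the classification of rigid conjugates of $\trak$ of canonical length $4k-1$.

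This classification is the main obstacle. I would examine the left normal form $y = y_1 \cdots y_{4k-1}$ and combine two constraints: the permutation of the five strands induced by $y$ must equal the one induced by $\trak$, which severely restricts the simple braid each $y_i$ can be; and consecutive pairs $(y_i, y_{i+1})$, including the wrap-around pair $(y_{4k-1}, y_1)$ coming from rigidity, must be left-weighted. The simple factors appearing in $\trbk$ belong to a short list of geometric types ($\Delta_3\sigma_4$, $\delta_3\sigma_4$, $\tdelta_3\sigma_4$, $\sigma_1\sigma_3$, $\sigma_3\sigma_4$, and $\delta_3\sigma_4\sigma_3\sigma_4$), and conjugation by $\Delta$ interchanges the factors involving $\delta_3$ with those involving $\tdelta_3$. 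Using cycling to normalize the starting factor (say, so that $y_1 = \sigma_3\sigma_4\Delta_3$ or $y_1 = \delta_3\sigma_4\sigma_3\sigma_4$, these being the factors with the most distinguished permutations), a finite left-to-right case analysis through the normal form should force $y$ to coincide with $\trbk$ or with $\Delta \trbk \Delta^{-1}$ up to a cycling. The bookkeeping of this case analysis is the technical heart of the proof; in particular, the alternation $(\delta_3\sigma_4)(\tdelta_3\sigma_4)$ inside $\trbk$ must be recovered by showing that, once a factor of type $\delta_3\sigma_4$ has occurred, left-weightedness together with the global permutation forces the next factor to be of type $\tdelta_3\sigma_4$, and conversely.
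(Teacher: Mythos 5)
Your first inclusion (the orbit of $\trbk$ lies in the SC) is fine, and your reduction to rigid elements is recoverable: the paper gets it from Corollary~11 of \cite{GGM} (if the SC contains one rigid element, all of its elements are rigid), which is a citable theorem rather than the ``short argument using the preferred prefix'' you sketch. The genuine gap is in the reverse inclusion. First, a concrete error: conjugate braids induce \emph{conjugate} permutations of the punctures, not equal ones, so the constraint ``the permutation induced by $y$ equals the one induced by $\trak$'' is false and cannot be used to restrict the factors $y_i$. More fundamentally, membership in the conjugacy class of $\trak$ is not a condition that can be read off locally from a normal form; your proposed case analysis would at best enumerate all rigid braids of length $4k-1$ whose consecutive factors (cyclically) are left-weighted, and you offer no mechanism either to discard the many such braids that are simply not conjugate to $\trak$, or to justify that the factors of an arbitrary rigid conjugate of $\trak$ must belong to your ``short list of geometric types.'' Without such a justification the case analysis is neither finite in a useful sense nor a proof of the inclusion.

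The missing idea is a connectivity-plus-transport argument, which is exactly what the paper uses. Lemma~6.1 of \cite{GMW} says that any two conjugate elements of the SC are joined by a chain of conjugations by simple braids $s_i$ with $s_i \preccurlyeq \iota(\alpha_i)$ or $s_i \preccurlyeq \partial\phi(\alpha_i)$; Gebhardt's transport (Lemma~\ref{lem:Geb}) then pushes such a prefix along the cycling orbit back to $\trbk$ itself, so that an element of the SC outside $\orb(\trbk)$ would force some \emph{strict} prefix $p$ of $\iota(\trbk)=\sigma_1\sigma_3$ or of $\partial\phi(\trbk)=\sigma_2\sigma_1\sigma_3\sigma_2\sigma_4$ to conjugate $\trbk$ into the SC but out of the orbit. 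This reduces the whole lemma to an explicit check of nine conjugates $p^{-1}\trbk p$ (each either has canonical length $4k$, hence is not in the SSS, or is non-rigid, hence not in the SC), a finite computation independent of $k$. You should replace your normal-form classification by this reduction, or supply an equivalent structural theorem about how the SC is connected.
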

We shall denote this orbit under cycling and conjugation by $\orb(\trbk)$. Note that, because of the rigidity of $\trbk$, a cycling of $\trbk$ is just a cyclic permutation of its factors.

The proof is based on the following two lemmas: Lemma \ref{lem:Geb} gathers some results proved in \cite{Geb} or in \cite{BGGM} (Section 3.3), and Lemma \ref{lem:6.1} is stated and proven in \cite{GMW} (Lemma 6.1).
\begin{defi}[transport]
Let $x \in \B_n$ be in its super summit set. Let $s$ be a simple braid such that $y = s^{-1} x s$ is in the super summit set of $x$. Let $x' = \iota(x)^{-1} x \iota(x)$ and $y' = \iota(y)^{-1} y \iota(y)$ be the braids obtained by cycling from $x$ and $y$, respectively. We call \emph{transport} of $s$ the braid $s^{(1)} = \iota(x)^{-1} s \iota(y)$, that is to say the braid $s^{(1)}$ such that $y' = (s^{(1)})^{-1} x' s^{(1)}$.
\end{defi}
\begin{lem}\label{lem:Geb}
If $s$ is simple, $s^{(1)}$ is simple. Moreover, for $s$ and $t$ simple, $s \preccurlyeq t$ implies $s^{(1)} \preccurlyeq t^{(1)}$, and $(\iota(x))^{(1)} = \iota(x')$. In particular, if $s$ is a prefix of $\iota(x)$, then $s^{(1)}$ is a prefix of $\iota(x')$. We also have $(\partial\phi(x))^{(1)} = \partial\phi(x')$ and in particular, if $s$ is a prefix of $\partial\phi(x)$, then $s^{(1)}$ is a prefix of $\partial\phi(x')$.
\end{lem}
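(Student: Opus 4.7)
The plan is to derive each claim directly from the defining formula $s^{(1)} = \iota(x)^{-1} s \iota(y)$ by careful manipulation of left normal forms. Since $x$ and $y$ lie in the same super summit set, they share $\inf = p$ and canonical length $r$; after an appropriate shift by a power of $\Delta$ we may assume $p = 0$, write $x = x_1 \cdots x_r$ and $y = y_1 \cdots y_r$ in left normal form, so that $\iota(x) = x_1$, $\iota(y) = y_1$, and $s^{(1)} = x_1^{-1} s y_1$. The conjugation identity $y' = (s^{(1)})^{-1} x' s^{(1)}$ is then a direct substitution using $y = s^{-1} x s$, $x' = x_1^{-1} x x_1$, and $y' = y_1^{-1} y y_1$.

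For the simplicity of $s^{(1)}$, I would exploit the equation $xs = sy$ in $\B_n^+$. Both $x_1$ and $s$ are simple prefixes of this positive braid, hence both divide its initial factor $\iota(xs) = \iota(sy)$. The fact that $y$ is in its super summit set forces $\sup(sy) \leqslant r+1$, and a short case analysis on the left normal form of $sy$ then yields $x_1 \preccurlyeq s y_1$; hence $s^{(1)} = x_1^{-1} s y_1$ is positive. Applying the same argument on the right, using $xs = sy$ and the final factors, yields $s^{(1)} \preccurlyeq \Delta$.

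For monotonicity, write $t = s u$ with $u$ simple and positive; then $y_t := t^{-1} x t = u^{-1} y_s u$, which realises $u$ as a valid transport datum from $y_s$ to $y_t$. A direct computation gives the composition formula
\[t^{(1)} = x_1^{-1} t \iota(y_t) = (x_1^{-1} s \iota(y_s))(\iota(y_s)^{-1} u \iota(y_t)) = s^{(1)} \cdot u^{(1)},\]
where $u^{(1)}$ is the transport of $u$ relative to the pair $(y_s, y_t)$ and is simple by the previous step; this shows $s^{(1)} \preccurlyeq t^{(1)}$.

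The identity $(\iota(x))^{(1)} = \iota(x')$ is the special case $s = x_1$, for which $y = x_1^{-1} x x_1 = x'$ and $s^{(1)} = x_1^{-1} \cdot x_1 \cdot \iota(x') = \iota(x')$. Combining with monotonicity yields the prefix-preservation clause for $\iota$. The identity $(\partial\phi(x))^{(1)} = \partial\phi(x')$ and its prefix-preservation clause are then obtained by a dual argument, using $\partial\phi(x) \cdot \phi(x) = \Delta$ and the mirror behaviour of cycling on the final factor. The main obstacle I anticipate is the simplicity step, which requires the most delicate left-normal-form bookkeeping; once $s^{(1)} \in [1,\Delta]$ and the composition formula $t^{(1)} = s^{(1)} u^{(1)}$ are established, the remaining identities are essentially formal.
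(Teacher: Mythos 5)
The paper itself does not prove this lemma: it is explicitly presented as a compilation of results from Gebhardt's paper \cite{Geb} and from Section~3.3 of \cite{BGGM}, so there is no internal proof to compare yours against. Judged on its own terms, your outline has the right skeleton --- in particular the composition formula $t^{(1)} = s^{(1)}\cdot u^{(1)}$ with $u = s^{-1}t$ is exactly the right mechanism for monotonicity (and your verification that $u$ is simple and that $y_s, y_t$ are legitimate transport data is sound), and the special case $s=\iota(x)$ giving $(\iota(x))^{(1)}=\iota(x')$ is correct and immediate. The positive half of the simplicity claim can in fact be closed cleanly and without any case analysis: $\iota(x)$ is a simple prefix of $xs=sy$, hence $\iota(x)\preccurlyeq\alpha_1(sy)=\alpha_1\bigl(s\,\alpha_1(y)\bigr)\preccurlyeq s\,\iota(y)$, using the standard identity $\alpha_1(ab)=\alpha_1(a\,\alpha_1(b))$ for positive braids.

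The genuine gaps are at the two load-bearing points. First, the bound $s^{(1)}\preccurlyeq\Delta$ does \emph{not} follow ``by the same argument on the right'': the transport $s^{(1)}=\iota(x)^{-1}s\,\iota(y)$ is built from \emph{initial} factors on both sides, so there is no mirror symmetry exchanging it with a statement about final factors; this inequality is precisely where the hypothesis that $x$ and $y$ (and their cyclings) have the same infimum and supremum must enter, and it is the substantive content of Gebhardt's lemma --- your sketch does not supply it. Second, the identity $(\partial\phi(x))^{(1)}=\partial\phi(x')$ is likewise dispatched as ``a dual argument'', but again there is no formal duality converting the $\iota$-statement into the $\partial\phi$-statement (note $\partial\phi(x)=\phi(x)^{-1}\Delta$, so its transport mixes initial and final data); in \cite{BGGM} this requires its own computation. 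Finally, the opening reduction to $\inf x=0$ needs justification: $\Delta^{-p}x$ is not conjugate to $x$, and odd powers of $\Delta$ introduce the flip automorphism $\tau$, so one must check that the definition of transport and all the claimed identities are equivariant under this renormalisation rather than simply ``assume $p=0$''.
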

\begin{lem}\label{lem:6.1}
Given two conjugate elements $x$ and $y$, both in their SC, there exists a sequence of elements $x = \alpha_1, \alpha_2, \ldots, \alpha_r, \alpha_{r+1} = y$, all in the SC, and simple braids $s_1, \ldots, s_r$, such that  $\alpha_{i+1} = s_i^{-1} \alpha_i s_i$, $i=1, \ldots, r$, and such that $s_i \preccurlyeq \iota(\alpha_i)$ or $s_i \preccurlyeq \iota(\alpha_i^{-1}) = \partial\phi(\alpha_i)$.
\end{lem}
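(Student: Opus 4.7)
By Lemma \ref{lem:trbk}, $\trbk \in \mathrm{SC}(\trak)$, and since cycling and conjugation by $\Delta$ both preserve the SC, the inclusion $\orb(\trbk) \subseteq \mathrm{SC}(\trak)$ is immediate. All the work is in the reverse inclusion. Given $y \in \mathrm{SC}(\trak)$, apply Lemma \ref{lem:6.1} to the pair $(\trbk, y)$: there is a sequence $\trbk = \alpha_1, \ldots, \alpha_{r+1} = y$ inside $\mathrm{SC}(\trak)$ whose steps are conjugations by simple braids $s_i$ each of which is a prefix either of $\iota(\alpha_i)$ or of $\partial\phi(\alpha_i)$. It therefore suffices to show that $\orb(\trbk)$ is closed under such \emph{admissible} conjugations: for every $\alpha \in \orb(\trbk)$ and every simple $s$ satisfying $s \preccurlyeq \iota(\alpha)$ or $s \preccurlyeq \partial\phi(\alpha)$, if $s^{-1}\alpha s \in \mathrm{SC}(\trak)$ then $s^{-1}\alpha s \in \orb(\trbk)$.

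\textbf{Reduction to a finite case analysis.} Since $\orb(\trbk)$ is by definition stable under cycling and under $\Delta$-conjugation, the closure property needs to be checked at only one representative per cyclic position in the normal form, modulo the $\tau_\Delta$ automorphism. Using the explicit factorisation
\[\trbk = (\sigma_1\sigma_3)^{2k-2}(\sigma_3\sigma_4\Delta_3)(\Delta_3\sigma_4)[(\delta_3\sigma_4)(\tdelta_3\sigma_4)]^{k-1}(\delta_3\sigma_4\sigma_3\sigma_4),\]
the cyclic rotations group into a bounded (independent of $k$) list of \emph{types}: those internal to the long block $(\sigma_1\sigma_3)^{2k-2}$, the two boundary factors $\sigma_3\sigma_4\Delta_3$ and $\Delta_3\sigma_4$, the two alternating kinds of rotation inside the periodic middle $[(\delta_3\sigma_4)(\tdelta_3\sigma_4)]^{k-1}$, and the terminal factor $\delta_3\sigma_4\sigma_3\sigma_4$. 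For each type one computes the pair $(\iota(\alpha), \phi(\alpha))$ and thus $\partial\phi(\alpha) = \phi(\alpha)^{-1}\Delta$, then enumerates the simple prefixes of $\iota(\alpha)$ and of $\partial\phi(\alpha)$ to test.

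\textbf{Main obstacle.} The heart of the argument is this enumeration. For each rotation type and each candidate prefix $s$, one of two outcomes must be verified: either $s^{-1}\alpha s$ has strictly larger canonical length (the infimum drops or the supremum grows) and so falls out of $\mathrm{SC}(\trak)$, or else the resulting conjugate is recognisable as another element of $\orb(\trbk)$, namely a cyclic rotation of $\trbk$ possibly twisted by $\Delta$. The transport lemma (Lemma \ref{lem:Geb}) reduces the bookkeeping considerably, since admissibility and prefixhood both transport along cyclings: the entire $(2k-2)$-long block of identical $(\sigma_1\sigma_3)$ factors can then be handled by a single computation propagated by transport, and likewise for the periodic middle block. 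What remains is the delicate boundary where $(\sigma_1\sigma_3)^{2k-2}$ meets $\sigma_3\sigma_4\Delta_3$, where $\Delta_3\sigma_4$ meets the periodic middle, and where the latter meets the terminal $\delta_3\sigma_4\sigma_3\sigma_4$; at each of these junctions one must check directly that every proper non-trivial simple prefix of the relevant $\iota$ or $\partial\phi$ either breaks left-weightedness with an adjacent factor (which is the concrete mechanism by which the conjugate leaves $\mathrm{SC}$), or else gives a conjugate which a short calculation identifies with a rotation or $\Delta$-twisted rotation of $\trbk$.
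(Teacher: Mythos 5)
Your proposal does not address the statement it was supposed to prove. The statement in question is Lemma \ref{lem:6.1}, a general structural result of Garside theory: for \emph{any} two conjugate braids $x$ and $y$, both lying in their set of sliding circuits, there is a chain $x=\alpha_1,\ldots,\alpha_{r+1}=y$ inside the SC in which each step is conjugation by a simple braid $s_i$ satisfying $s_i \preccurlyeq \iota(\alpha_i)$ or $s_i \preccurlyeq \partial\phi(\alpha_i)$. What you have written is instead a proof sketch of Lemma \ref{lem:orbite} --- the statement that the SC of $\trak$ is the single orbit $\orb(\trbk)$ under cycling and conjugation by $\Delta$ --- and, critically, your argument \emph{invokes} Lemma \ref{lem:6.1} as its main tool (``apply Lemma \ref{lem:6.1} to the pair $(\trbk, y)$''). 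As a proof of Lemma \ref{lem:6.1} itself, this is circular: you assume precisely the statement you were asked to establish. This is a fatal gap, not a repairable one, since the rest of your text (the case analysis on rotation types of the normal form of $\trbk$, the junction checks, the use of transport) is entirely specific to the braid $\trbk$, whereas Lemma \ref{lem:6.1} quantifies over arbitrary conjugate pairs in arbitrary $\B_n$ and cannot be deduced from any amount of computation with one example.

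For comparison: the paper does not prove Lemma \ref{lem:6.1} at all; it cites it as Lemma 6.1 of \cite{GMW}, where it is proved by general Garside-theoretic methods (decomposing a positive conjugator between two SC elements into minimal simple conjugators and showing, via convexity and transport properties of the set of sliding circuits, that each such minimal conjugator is a prefix of $\iota$ or of $\partial\phi$ of the element it acts on). Your text would be better read as a blind attempt at the paper's proof of Lemma \ref{lem:orbite}; even in that role it is only a strategy outline, since the decisive content there is the explicit enumeration of the nine strict prefixes of $\iota(\trbk)=\sigma_1\sigma_3$ and of $\partial\phi(\trbk)=\sigma_2\sigma_1\sigma_3\sigma_2\sigma_4$ together with the normal-form computations showing each conjugate either leaves the SSS or fails rigidity, which you describe as a task to be done rather than carry out.
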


\begin{proof}[Proof of Lemma \ref{lem:orbite}]
Let $\iota = \iota(\trbk) = \sigma_1\sigma_3$ be the initial factor of $\trbk$ and $\phi = \phi(\trbk) = \delta_3 \sigma_4 \sigma_3 \sigma_4$ its final factor, and $\partial\phi = \sigma_2\sigma_1\sigma_3\sigma_2\sigma_4$ the complement of $\phi$ (that is, $\phi\cdot \partial\phi=\Delta$). 
Let us show that for each strict prefix $p$ of $\iota$ or of $\partial\phi$, the braid $p^{-1}\trbk p$ does not belong to the SC of $\trbk$.

Let us list the strict prefixes of $\iota$ and $\partial\phi$. The strict prefixes of $\iota = \sigma_1\sigma_3$ are $\sigma_1$ and $\sigma_3$. Those of $\partial\phi$ are $\sigma_2$, $\sigma_2\sigma_1$, $\sigma_2\sigma_3$, $\sigma_2\sigma_1\sigma_3$, $\sigma_2\sigma_3\sigma_4$, $\sigma_2\sigma_1\sigma_3\sigma_2$ and $\sigma_2\sigma_1\sigma_3\sigma_4$. For each of them, let us calculate the left normal form of $p^{-1}\trbk p$.

\begin{itemize}
\item For $\sigma_1$ :
\begin{align*}
\sigma_1^{-1} \trbk \sigma_1 &= \sigma_1^{-1}(\sigma_1\sigma_3)^{2k-2}(\sigma_3\sigma_4\Delta_3)(\Delta_3\sigma_4)[(\delta_3\sigma_4)(\tdelta_3\sigma_4)]^{k-1}(\delta_3 \sigma_4 \sigma_3 \sigma_4)\sigma_1\\
&= (\sigma_1\sigma_3)^{2k-2}(\sigma_3\sigma_4\Delta_3)(\tdelta_3\sigma_4)[(\delta_3\sigma_4)(\tdelta_3\sigma_4)]^{k-1}(\delta_3 \sigma_4 \sigma_3 \sigma_4)\sigma_1,\end{align*}
the last line being in left normal form.
\item For $\sigma_3$ :
\[\sigma_3^{-1} \trbk \sigma_3 = (\sigma_1\sigma_3)^{2k-3}(\sigma_3\sigma_4\Delta_3)(\Delta_3\sigma_4)^2(\tdelta_3\sigma_4)[(\delta_3\sigma_4)(\tdelta_3\sigma_4)]^{k-2}(\delta_3 \sigma_4 \sigma_3 \sigma_4)\sigma_3.\]
\item For $\sigma_2$ :
\begin{multline*}
\sigma_2^{-1} \trbk \sigma_2 = \Delta^{-1}(\Delta_3\sigma_3\sigma_2\sigma_4\sigma_3\delta_3)(\sigma_1\sigma_3)^{2k-2}(\sigma_3\sigma_4\Delta_3)(\Delta_3\sigma_4)\\ [(\delta_3\sigma_4)(\tdelta_3\sigma_4)]^{k-1}(\delta_3 \sigma_4 \sigma_3 \sigma_4\sigma_2).\end{multline*}
\item For $\sigma_2\sigma_1$ :
\begin{multline*}
(\sigma_2\sigma_1)^{-1} \trbk \sigma_2\sigma_1 = \Delta^{-1}(\Delta_3\sigma_3\delta_3\sigma_4\sigma_3)(\sigma_1\sigma_3)^{2k-2}(\sigma_3\sigma_4\Delta_3)(\Delta_3\sigma_4)\\ [(\delta_3\sigma_4)(\tdelta_3\sigma_4)]^{k-1}(\delta_3 \sigma_4 \sigma_3 \sigma_4\sigma_2\sigma_1).\end{multline*}
\item For $\sigma_2\sigma_3$ :
\begin{multline*}
(\sigma_2\sigma_3)^{-1} \trbk \sigma_2\sigma_3 = \Delta^{-1}(\tdelta_3\sigma_3\sigma_4\sigma_2\sigma_3\delta_3)(\sigma_1\sigma_3)^{2k-2}(\sigma_3\sigma_4\Delta_3)(\Delta_3\sigma_4)\\ [(\delta_3\sigma_4)(\tdelta_3\sigma_4)]^{k-1}(\delta_3 \sigma_4 \sigma_3 \sigma_4\sigma_2\sigma_3).\end{multline*}
\item For $\sigma_2\sigma_1\sigma_3$ :
\begin{multline*}(\sigma_2\sigma_1\sigma_3)^{-1} \trbk \sigma_2\sigma_1\sigma_3 = \Delta^{-1}(\tdelta_3\sigma_3\delta_3\sigma_4\sigma_3)(\sigma_1\sigma_3)^{2k-2}(\sigma_3\sigma_4\Delta_3)(\Delta_3\sigma_4)\\ [(\delta_3\sigma_4)(\tdelta_3\sigma_4)]^{k-1}(\delta_3 \sigma_4 \sigma_3 \sigma_4\sigma_2\sigma_1\sigma_3).\end{multline*}
\item For $\sigma_2\sigma_3\sigma_4$ :
\begin{multline*}(\sigma_2\sigma_3\sigma_4)^{-1} \trbk \sigma_2\sigma_3\sigma_4 = \Delta^{-1}(\sigma_2\sigma_3\sigma_2\sigma_4\sigma_3\delta_3)(\sigma_1\sigma_3)^{2k-2}(\sigma_3\sigma_4\Delta_3)(\Delta_3\sigma_4)^2\\ (\tdelta_3\sigma_4)[(\delta_3\sigma_4)(\tdelta_3\sigma_4)]^{k-2}(\delta_3 \sigma_4 \sigma_3\sigma_4\sigma_2\sigma_3).\end{multline*}
\item For $\sigma_2\sigma_1\sigma_3\sigma_2$ :
\begin{multline*}(\sigma_2\sigma_1\sigma_3\sigma_2)^{-1} \trbk \sigma_2\sigma_1\sigma_3\sigma_2 = 
(\sigma_3\sigma_4\sigma_3\Delta_3)(\sigma_2\sigma_4)^{2k-2}(\delta_3\sigma_3\sigma_4\sigma_3)\\ (\sigma_1\sigma_4\sigma_3)[(\sigma_1\sigma_3\sigma_4)(\sigma_1\sigma_4\sigma_3)]^{k-1}.\end{multline*}
\item For $\sigma_2\sigma_1\sigma_3\sigma_4$ :
\begin{multline*}(\sigma_2\sigma_1\sigma_3\sigma_4)^{-1} \trbk \sigma_2\sigma_1\sigma_3\sigma_4 = \Delta^{-1}(\sigma_2\sigma_3\delta_3\sigma_4\sigma_3)(\sigma_1\sigma_3)^{2k-2}(\sigma_3\sigma_4\Delta_3)(\Delta_3\sigma_4)\\ [(\delta_3\sigma_4)(\tdelta_3\sigma_4)]^{k-1}(\delta_3 \sigma_4 \sigma_3 \sigma_4\sigma_2\sigma_1\sigma_3\sigma_4).\end{multline*}
\end{itemize}
We observe that the canonical length of $\sigma_1^{-1} \trbk \sigma_1$, $\sigma_2^{-1} \trbk \sigma_2$, $(\sigma_2\sigma_1)^{-1} \trbk \sigma_2\sigma_1$, $(\sigma_2\sigma_3)^{-1} \trbk \sigma_2\sigma_3$, $(\sigma_2\sigma_1\sigma_3)^{-1} \trbk \sigma_2\sigma_1\sigma_3$, $(\sigma_2\sigma_3\sigma_4)^{-1} \trbk \sigma_2\sigma_3\sigma_4$ and $(\sigma_2\sigma_1\sigma_3\sigma_4)^{-1} \trbk \sigma_2\sigma_1\sigma_3\sigma_4$ is $4k$, so that these elements are not in the SSS of $\trbk$, and \emph{a fortiori} not in its SC.

As to $\sigma_3^{-1} \trbk \sigma_3$ and $(\sigma_2\sigma_1\sigma_3\sigma_4)^{-1} \trbk \sigma_2\sigma_1\sigma_3\sigma_4$, they certainly are in the SSS, but they are not rigid, and so they cannot be in the SC, because if the SC contains one rigid element, then all its elements are rigid (see Corollary 11 in \cite{GGM}).

Now, Lemmas \ref{lem:Geb} and \ref{lem:6.1} allow us to conclude. Let us assume that there exists a braid $\alpha$ in the set of sliding circuits of $\trbk$ which is not in $\orb(\trbk)$. According to Lemma \ref{lem:6.1}, there is a sequence $\trbk = \alpha_1, \alpha_2, \ldots, \alpha_r,\alpha_{r+1} = \alpha$ of elements in the set of sliding circuits that verify the conclusions of the Lemma \ref{lem:6.1}. Let $i$ be the smallest index such that $\alpha_1, \ldots, \alpha_{i}$ are in $\orb(\trbk)$ and $\alpha_{i+1}$ is not. We denote by $s$ the braid by which $\alpha_i$ is conjugated to get $\alpha_{i+1}$. According to Lemma \ref{lem:6.1}, it is a simple braid which is a prefix of $\iota(\alpha_i)$ or $\partial\phi(\alpha_i)$.

Let $\alpha_i = \gamma_0, \gamma_1, \ldots, \gamma_t = \trbk$ be the elements of $\orb(\trbk)$ such that $\gamma_{j+1}$ is obtained from $\gamma_j$ by cycling (or, if necessary, $\gamma_{t} = \Delta^{-1}\gamma_{t-1}\Delta$). According to Lemma \ref{lem:Geb}, the transport $s^{(1)}$ of $s$ is a simple braid, prefix of $\iota(\gamma_1)$ or of $\partial\phi(\gamma_1)$. We define by induction $s^{(j+1)} = (s^{(j)})^{(1)}$ (or, if necessary, $s^{(t)} = \Delta^{-1}s^{(t-1)}\Delta$). Still according to Lemma \ref{lem:Geb} (and, if necessary, to the fact that the conjugate by $\Delta$ of a prefix of $\iota(x)$ is a prefix of $\iota(\Delta^{-1}x\Delta)$), for all $j$, $s^{(j)}$ is a prefix of $\iota(\gamma_j)$ or of $\partial\phi(\gamma_j)$. In particular, $s^{(t)}$ is a prefix of $\iota(\trbk)$ or of $\partial\phi(\trbk)$. So $\trbk$ is sent by conjugation by $s^{(t)}$ to a braid that is not in its orbit $\orb(\trbk)$, since it is in the orbit of $\alpha_{i+1}$. But this is impossible: according to the exhaustive case checking above, the conjugate of $\trbk$ by a strict prefix of $\iota(\trbk)$ or of $\partial\phi(\trbk)$ is never an element of the set of sliding circuits, and the conjugates by $\iota(\trbk)$ and $\partial\phi(\trbk)$ themselves are elements of $\orb(\trbk)$.
\end{proof}

\subsection{Round reduction curves}

We want to prove the following theorem
\begin{theo}\label{theo:pseudoAnosov}
The braid $\trbk$ (and thus also the braid $\trak$, which is conjugated to it) is pseudo-Anosov.
\end{theo}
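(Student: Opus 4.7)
The strategy is to apply the Nielsen--Thurston classification and show that $\trbk$ is neither periodic nor reducible; since $\trak$ and $\trbk$ are conjugate by Lemma~\ref{lem:trbk}, this also settles $\trak$.

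Periodicity is ruled out using rigidity. Since $\trbk$ is rigid with $\inf\trbk=0$ and canonical length $4k-1$, a standard property of rigid elements in Garside groups yields that $\trbk^m$ is still rigid with $\inf\trbk^m=0$ and canonical length $m(4k-1)>0$ for every $m\geqslant 1$. Every $\Delta^l$ has canonical length $0$, so no nonzero power of $\trbk$ can equal $\Delta^l$, and $\trbk$ is not periodic.

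Non-reducibility is the core of the theorem. Essential isotopy classes of simple closed curves in $\disque{5}$ minus its punctures are parametrised by nonempty proper subsets $P\subset\{1,\ldots,5\}$ with $|P|\in\{2,3,4\}$; the \emph{round} classes are those for which $P$ consists of consecutive punctures (nine in $\disque{5}$). This subsection handles the round case. My plan is to first compute the induced permutation $\pi(\trbk)\in S_5$ directly from the normal form of $\trbk$, and then check for each of the nine round subsets whether $\pi(\trbk)$ preserves it setwise: any $P$ that is not preserved gives at once a round class moved by $\trbk$. For those few $P$ that $\pi(\trbk)$ happens to stabilise, a more careful inspection of $\trbk$, for instance by reading off its restriction to the sub-disk bounded by the candidate round curve, would show the curve itself is moved. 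Finally, one must exclude preserved \emph{families} of round curves cyclically permuted by $\trbk$, by observing that any $\pi(\trbk)$-orbit of a round subset either exceeds the maximal size of a multi-curve in $\disque{5}$ or spills into non-round subsets.

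The principal remaining obstacle, to be handled in a subsequent subsection, is excluding non-round reduction curves -- those whose enclosed subset is not a block of consecutive punctures. These are more delicate because they are not detected by the permutation alone, and typically require genuine geometric input such as invariant train tracks, laminations, or an explicit analysis of how $\trbk$ acts on tubular neighbourhoods of the candidate curves. Once both round and non-round cases are dealt with, the Nielsen--Thurston trichotomy forces $\trbk$, and hence $\trak$, to be pseudo-Anosov.
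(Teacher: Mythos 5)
Your treatment of periodicity matches the paper's argument and is fine. The problem is non-reducibility: your plan only handles round reduction curves and explicitly defers the non-round ones to a ``subsequent subsection'' relying on unspecified geometric input (train tracks, laminations, tubular neighbourhoods). That deferred step is exactly the hard part, so as written the proposal does not prove the theorem. Note also that even a complete analysis of which round curves $\trbk$ itself preserves would not suffice on its own: reducibility asserts the existence of an invariant multicurve for \emph{some} conjugate, and the canonical reduction curves of a reducible braid need not be round, nor need they be visible on $\trbk$ rather than on one of its conjugates.

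The missing idea is the reduction to round curves via Garside theory, which is how the paper closes this gap. By Proposition \ref{prop:GM:courbesrondes} (Gonz\'alez-Meneses), every reducible non-periodic braid has a conjugate \emph{in its set of sliding circuits} that sends some round curve to a round curve. By Lemma \ref{lem:orbite}, the SC of $\trbk$ is a single orbit under cycling and conjugation by $\Delta$, and by Proposition \ref{prop:BGN} (Bernadete--Gutierrez--Nitecki) the property of sending a round curve to a round curve propagates along that orbit back to $\trbk$ itself. Hence it suffices to verify that $\trbk$ sends no round curve to a round curve, which the paper does factor by factor, again using Proposition \ref{prop:BGN} to stop as soon as some partial product produces a non-round image. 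This factor-by-factor tracking also replaces your permutation heuristic, which only tests the underlying puncture sets and would leave the setwise-stabilised cases unresolved. Without this reduction (or a concrete substitute for the non-round case), the proposal has a genuine gap.
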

First of all, it is easy to show the
\begin{lem}
The braid $\trbk$ is not periodic.
\end{lem}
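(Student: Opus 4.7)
The plan is to leverage the rigidity of $\trbk$ established in Lemma \ref{lem:trbk}. Rigidity of a braid with positive canonical length is precisely the condition which makes its left normal form ``loop back on itself''; as a consequence, every positive power $\trbk^n$ is already in left normal form, with canonical length $n(4k-1)$. Since any power of $\Delta$ has canonical length $0$, no nontrivial power of $\trbk$ can coincide with a power of $\Delta$, which is exactly what non-periodicity rules out.

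More concretely, I would proceed as follows. Write the left normal form supplied by Lemma \ref{lem:trbk} as $\trbk = x_1 x_2 \cdots x_{4k-1}$, with $\inf \trbk = 0$. Two consecutive factors $x_i, x_{i+1}$ are left-weighted by definition of the normal form, and rigidity supplies the remaining pair: $(x_{4k-1}, x_1)$ is also left-weighted, since $\phi(\trbk) = x_{4k-1}$ and $\iota(\trbk) = x_1$. Therefore the concatenation $x_1 \cdots x_{4k-1} x_1 \cdots x_{4k-1}$ has every pair of consecutive factors left-weighted, so it is the left normal form of $\trbk^2$; iterating, the left normal form of $\trbk^n$ consists of $n(4k-1)$ factors, giving canonical length exactly $n(4k-1)$ and infimum $0$ for every $n \geqslant 1$.

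Now suppose, for a contradiction, that $\trbk$ is periodic, so that $\trbk^m = \Delta^l$ for some nonzero integers $m$ and $l$. Replacing $(m,l)$ by $(-m,-l)$ if necessary, we may assume $m \geqslant 1$. The right-hand side $\Delta^l$ has canonical length $0$, while the left-hand side has canonical length $m(4k-1) \geqslant 4k-1 > 0$, a contradiction. I do not anticipate any serious obstacle here: the whole argument is a one-line consequence of rigidity and of the definition of the canonical length, which is precisely why the paper describes this lemma as easy.
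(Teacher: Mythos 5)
Your proof is correct and follows essentially the same route as the paper: both use the rigidity of $\trbk$ (with $\inf\trbk = 0$) to conclude that the left normal form of $\trbk^n$ is the $n$-fold juxtaposition of that of $\trbk$, hence has positive canonical length and cannot equal a power of $\Delta$. You merely spell out the details (the left-weightedness of the wrap-around pair, the reduction to $m\geqslant 1$) that the paper leaves implicit.
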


\begin{proof}
If $\trbk$ was periodic, one of its powers would be equal to a power of $\Delta$. But $\trbk$ is rigid with $\inf \trbk = 0$, so the left normal form of a power of $\trbk$ is directly obtained by juxtaposing that of $\trbk$ the suitable number of times. It is then obvious that it is not a power of $\Delta$.
\end{proof}

It remains to prove that $\trbk$ is not reducible. The following result (Corollary 4.3 in \cite{GM}) allows us to reduce to the case of round curves (\emph{i.e.} homotopic to a circle):
\begin{prop}\label{prop:GM:courbesrondes}
For every reducible, non periodic braid $x \in \B_n$, there exists a conjugate of $x$ in its SC that sends at least one round curve to a round curve.
\end{prop}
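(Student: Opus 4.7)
My plan is to combine the equivariance of the canonical reduction system under conjugation with a minimization argument inside the super summit set. Since $x$ is reducible and non-periodic, it admits a non-empty canonical reduction system (CRS) $\mathcal{R}$, namely a finite family of disjoint essential simple closed curves in $\disque n$ preserved setwise by $x$. Under conjugation this transforms equivariantly: for $y = g^{-1}xg$, the CRS of $y$ is $g^{-1}(\mathcal{R})$. The statement thus reduces to finding $g$ such that $g^{-1}xg$ lies in the SC of $x$ and at least one curve $c \in g^{-1}(\mathcal{R})$ is round with round image $(g^{-1}xg)(c)$.

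To produce such a $g$, I would introduce a non-negative integer complexity $\Phi(y)$ on elements $y$ of the SC, designed so that $\Phi(y) = 0$ is equivalent to the desired property. A natural choice is the minimum, over curves $c$ in the CRS of $y$, of the sum of the geometric intersection numbers of $c$ and of $y(c)$ with a fixed reference arc, for instance the real axis. The SC of $x$ is non-empty, so a minimum of $\Phi$ over the SC exists; the key claim is that this minimum equals $0$.

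To establish the claim, I would argue by contradiction: if $y$ in the SC achieves $\Phi(y) > 0$, construct a simple braid $s$ whose conjugation, followed if necessary by cyclic sliding, strictly reduces $\Phi$. The element $s$ would come from a local surgery straightening one of the non-round curves in the CRS of $y$; the transport formalism of Lemma~\ref{lem:Geb} and the connectivity result of Lemma~\ref{lem:6.1} provide the right vocabulary to describe $s$ as a prefix of $\iota(y)$ or $\partial\phi(y)$, ensuring a predictable effect on the normal form.

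The main obstacle is keeping the conjugate inside the SC (rather than only the SSS) after the straightening: a surgery that simplifies the geometry can very well destroy rigidity and increase canonical length. Since cyclic sliding preserves both the SC and the equivariant transport of the CRS---it is itself conjugation by a simple braid, the preferred prefix---one expects that after finitely many sliding steps the braid re-enters the SC without losing the geometric improvement. Making this precise, and verifying that every straightening surgery can be realised by a conjugation compatible with the normal form, is the technical heart of the argument and the reason the proof in \cite{GM} requires a careful case analysis.
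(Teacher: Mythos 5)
The paper does not prove this proposition at all: it is quoted verbatim as Corollary~4.3 of \cite{GM}, so the only thing to compare your attempt against is the argument in that reference, which proceeds quite differently (via the lattice of positive ``standardizers'' of a curve system --- the positive braids sending it to a round one --- which is closed under greatest common prefix and hence has a canonical minimal element, whose compatibility with normal forms, cyclings and slidings is then analysed with the help of the Bernadete--Gutierrez--Nitecki theorem, Proposition~\ref{prop:BGN} here).

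Your proposal, as written, is a plan rather than a proof, and the missing piece is exactly the content of the theorem. First, a small but real defect: an essential curve in $\disque n$ must enclose at least two punctures lying on the real axis, so its geometric intersection number with your reference arc is at least $2$; hence $\Phi(y)=0$ is never attained and is certainly not equivalent to ``some reduction curve and its image are round'' (you would need $\Phi(y)=4$, say). More seriously, the descent step is entirely unproven: you do not construct the simple braid $s$, you do not show that $s$ can always be chosen as a prefix of $\iota(y)$ or $\partial\phi(y)$ in such a way that conjugation by it strictly decreases $\Phi$, and you do not show that the iterated cyclic slidings needed to bring $s^{-1}ys$ back into the SC do not increase $\Phi$ again --- conjugation by a preferred prefix has no a priori monotone effect on intersection numbers of non-round curves. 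Invoking Lemma~\ref{lem:6.1} here is also misleading: that lemma connects two elements \emph{already} in the SC by conjugations along prefixes of initial factors; it gives no mechanism for producing an element of the SC with smaller $\Phi$ from one with larger $\Phi$. Acknowledging that ``making this precise \ldots is the technical heart of the argument'' amounts to assuming the result you are asked to prove.
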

Let us also state the following theorem of Bernadete, Gutierrez and Nitecki (Theorem 5.7 in \cite{BGN}) as given in \cite{Calvez} (Theorem 1):
\begin{prop}\label{prop:BGN}
Let $x \in \B_n$, seen as a mapping class in $\mcg(\disque n, \partial \disque n)$, with left normal form $x = \Delta^p x_1 \cdots x_r$. Let $\mathcal C$ be a round curve in $\disque n$. If $x(\mathcal C)$ is round, then $\Delta^p x_1\cdots x_m(\mathcal C)$ is round for all $m = 1, \ldots, r$.
\end{prop}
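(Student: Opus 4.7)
The plan is to prove the statement by contradiction, using downward induction on $m$. First, reduce to the case $p = 0$: since $\Delta$ is the half-twist about $\partial \disque n$, it permutes the punctures by reflection and hence sends round curves to round curves. Therefore $\Delta^p x_1 \cdots x_m(\mathcal{C}) = \Delta^p \bigl(x_1 \cdots x_m(\mathcal{C})\bigr)$ is round if and only if $x_1 \cdots x_m(\mathcal{C})$ is, and the same holds for $m = r$. So it suffices to prove the proposition under the assumption that $x = x_1 \cdots x_r$ is a positive braid in left normal form with $x(\mathcal{C})$ round.

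Suppose, for contradiction, that some prefix $x_1 \cdots x_m(\mathcal{C})$ fails to be round. Let $m^{*}$ be the smallest index strictly larger than $m$ such that $x_1 \cdots x_{m^{*}}(\mathcal{C})$ is round again; such an $m^{*}$ exists because the full image $x_1 \cdots x_r(\mathcal{C})$ is round by hypothesis. Writing $\gamma := x_1 \cdots x_{m^{*}-1}(\mathcal{C})$, the simple factor $x_{m^{*}}$ then sends the non-round curve $\gamma$ to a round curve, while $x_{m^{*}-1}$ has just produced the non-round curve $\gamma$ from $x_1 \cdots x_{m^{*}-2}(\mathcal{C})$. The argument will conclude by exhibiting a generator $\sigma_i$ such that both $x_{m^{*}-1}\sigma_i$ and $\sigma_i^{-1} x_{m^{*}}$ remain simple, directly contradicting the left-weightedness of the pair $(x_{m^{*}-1}, x_{m^{*}})$ that is built into the left normal form.

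Producing such a $\sigma_i$ is the heart of the proof, and is where the main difficulty lies. The natural framework is curve diagrams (or equivalently Dynnikov coordinates): assign to each essential simple closed curve in $\disque n$ a combinatorial complexity, and track how each generator $\sigma_j$ modifies this complexity depending on the local configuration of the curve near punctures $j$ and $j+1$. Roundness corresponds to the minimal configuration around a contiguous block of punctures, so the generator $\sigma_i$ inside $x_{m^{*}}$ responsible for the final rounding step must act between the two punctures where $\gamma$ last fails to be round. The delicate point is to verify that this specific $\sigma_i$ can be simultaneously factored out of $x_{m^{*}}$ on the left (so that $\sigma_i^{-1}x_{m^{*}}$ remains a divisor of $\Delta$) and absorbed at the end of $x_{m^{*}-1}$ on the right (so that $x_{m^{*}-1}\sigma_i$ remains a divisor of $\Delta$). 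Making this symmetric identification precise, by direct inspection of the combinatorics of simple braids acting on curves and of the permutations they induce on the punctures bounding $\gamma$ and $x_{m^{*}}(\gamma)$, is the step that requires the real work of the Bernadete--Gutierrez--Nitecki argument.
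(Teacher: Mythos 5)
This proposition is not proved in the paper at all: it is quoted as an external result (Theorem 5.7 of Bernadete--Gutierrez--Nitecki, in the form given as Theorem 1 of Calvez), so there is no internal proof to compare yours against. Judged on its own terms, your proposal is a strategy outline rather than a proof, and the gap is the one you yourself flag: the entire content of the theorem is the production of a generator $\sigma_i$ such that $x_{m^*-1}\sigma_i$ and $\sigma_i^{-1}x_{m^*}$ are both simple, and you never exhibit it. Saying that this step ``requires the real work of the Bernadete--Gutierrez--Nitecki argument'' concedes that the argument has not been given. The reduction to $p=0$ is fine ($\Delta$ does preserve roundness), and contradicting left-weightedness is indeed the right target, but everything after ``Producing such a $\sigma_i$'' is a description of what a proof would have to do, not a proof.

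Beyond the missing core, the setup itself has an unjustified localization. You pick $m^*$ as the first index after $m$ where the curve becomes round again, and then aim the contradiction at the adjacent pair $(x_{m^*-1}, x_{m^*})$. But between $m$ and $m^*$ the curve may be non-round for many consecutive factors, and there is no a priori reason the ``offending'' generator can be shuffled across exactly that one junction: you would need to show that the rounding achieved by $x_{m^*}$ can be traced back to a single Artin generator that is simultaneously a left divisor of $x_{m^*}$ and absorbable on the right of $x_{m^*-1}$ without leaving the set of simple braids. It is conceivable (and must be ruled out) that the non-roundness created at step $m^*-1$ and repaired at step $m^*$ involves strands in a configuration where no single such $\sigma_i$ exists, so that the contradiction, if it arises at all, requires a more global bookkeeping over the whole block of non-round steps. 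This is precisely the delicate combinatorial analysis of how simple braids act on curves that the published proofs carry out and that your proposal leaves open.
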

A corollary of this last result is that, if an element of the SC sends a round curve to a round curve, then so do the braids obtained by applying cyclings to it (and so do, of course, their conjugates by $\Delta$). Now, according to Lemma \ref{lem:orbite}, the SC is reduced to only one orbit under cycling or conjugation by $\Delta$. Hence it suffices to show that $\trbk$ does not send any round curve to a round curve: this will imply that no element of the SC sends any round curve to a round curve, and then, due to Proposition \ref{prop:GM:courbesrondes}, that $\trbk$ is not reducible.

Figure \ref{fig:courbes} shows all the essential round curves for a braid with $5$ strings.
\begin{figure}[h]
\centering
\includegraphics{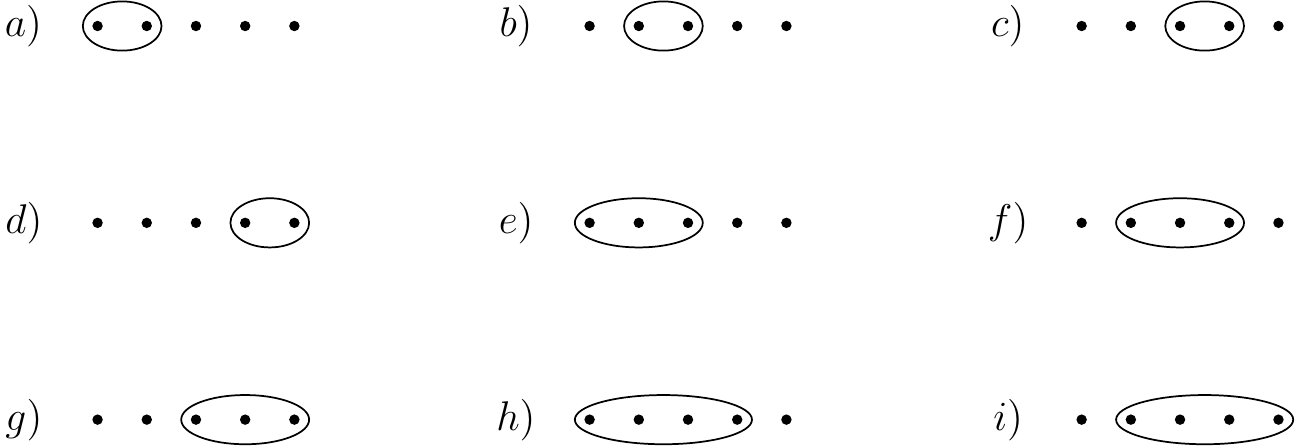}
\caption{All round curves for $5$ strings}
\label{fig:courbes}
\end{figure}

Let us check that none of these curves is sent by $\trbk$ to a round curve. Let us remark that, still according to Proposition \ref{prop:BGN}, it suffices that a braid consisting of the first factors of $\trbk$ transforms the round curve to a curve that is not round, in order to be sure that the final image is not round.

Let us check this explicitly for each of the round curves in Figure \ref{fig:courbes}. After applying the first factor $\sigma_1 \sigma_3$, only the images of the curves $a)$, $c)$, $g)$ and $h)$ are still round. These four are transformed into themselves, and so remain unchanged after applying $(\sigma_1\sigma_3)^{2k-2}$. The factor $\sigma_3\sigma_4\Delta_3$ transforms $a)$ into $b)$, whereas the images of $c)$, $g)$ and $h)$ are no longer  round. By $\Delta_3 \sigma_4$, $b)$ is again transformed into $a)$, then applying the elements $\delta_3\sigma_4$ and $\tdelta_3\sigma_4$ alternatively transforms $a)$ into $b)$ and $b)$ into $a)$. Finally, the last factor $\delta_3\sigma_4\sigma_3\sigma_4$ transforms $a)$ into a curve that is no longer round.

After all, none of the round curves is preserved by $\trbk$. This concludes the proof of Theorem \ref{theo:pseudoAnosov}.

\section{A lower bound for the cardinality of the super summit set}

\begin{theo}\label{theo:exp}
The braid \[\trak = \delta_3^{3k+1}\sigma_4^{2k+2} \sigma_3\sigma_4^{2k-1} = (\Delta_3\sigma_4)^{2k}(\delta_3 \sigma_4 \sigma_3 \sigma_4)(\sigma_3\sigma_4)(\sigma_4)^{2k-3}\]
is pseudo-Anosov, and the cardinality of its super summit set is at least $2^{2k-2}$.\end{theo}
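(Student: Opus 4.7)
The plan is to exhibit $2^{2k-2}$ distinct conjugates of $\trbk$ (and hence of $\trak$), all of canonical length $4k-1$; since $\trbk$ lies in the SSS by Lemma \ref{lem:trbk}, these will all belong to the SSS of $\trak$. The pseudo-Anosov claim is already Theorem \ref{theo:pseudoAnosov}, so nothing further is needed there. I parametrize candidates by binary strings: for $\epsilon = (\epsilon_1, \ldots, \epsilon_{2k-2}) \in \{0,1\}^{2k-2}$, set $\mu_i = \delta_3$ if $\epsilon_i = 0$ and $\mu_i = \tdelta_3$ if $\epsilon_i = 1$, and define
\[y(\epsilon) = (\sigma_1\sigma_3)^{2k-2}(\sigma_3\sigma_4\Delta_3)(\Delta_3\sigma_4)(\mu_1\sigma_4)(\mu_2\sigma_4)\cdots(\mu_{2k-2}\sigma_4)(\delta_3\sigma_4\sigma_3\sigma_4),\]
so that the alternating choice $\epsilon^*=(0,1,0,1,\ldots,0,1)$ recovers $\trbk$ itself.

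The first sub-step is to verify that this writing is in left normal form for every $\epsilon$. Each factor $\sigma_1\sigma_3$, $\sigma_3\sigma_4\Delta_3$, $\Delta_3\sigma_4$, $\delta_3\sigma_4$, $\tdelta_3\sigma_4$, $\delta_3\sigma_4\sigma_3\sigma_4$ is simple and distinct from $1$ and $\Delta$, which is immediate. The delicate point is left-weighting at each junction, and in particular between two consecutive middle factors $\mu_i\sigma_4$ and $\mu_{i+1}\sigma_4$: this must hold uniformly in $(\mu_i, \mu_{i+1}) \in \{\delta_3, \tdelta_3\}^2$. The expected verification is that the only generator $\sigma_j$ extending $\mu_i\sigma_4$ to a simple braid is $\sigma_j = \sigma_3$, and that $\sigma_3^{-1}\mu_{i+1}\sigma_4$ is never simple regardless of $\mu_{i+1}$. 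The boundary junctions with the $(\sigma_1\sigma_3)^{2k-2}\cdot(\sigma_3\sigma_4\Delta_3)\cdot(\Delta_3\sigma_4)$ prefix and with the $(\delta_3\sigma_4\sigma_3\sigma_4)$ suffix are handled similarly and independently of $\epsilon$. Once these checks pass, each $y(\epsilon)$ has canonical length exactly $4k-1$, and by uniqueness of the left normal form the $2^{2k-2}$ braids $y(\epsilon)$ are pairwise distinct.

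The genuinely difficult step is to show that each $y(\epsilon)$ is conjugate to $\trbk$ in $\B_5$. I would proceed by induction on the Hamming distance $d(\epsilon, \epsilon^*)$: it suffices to exhibit, for each index $i$, an explicit conjugating braid $c_i$ taking some $y(\epsilon)$ to $y(\epsilon')$, where $\epsilon'$ differs from $\epsilon$ only in coordinate $i$. The natural strategy is to cycle $y(\epsilon)$ enough times to bring the $i$-th middle factor into an accessible position, conjugate by a short braid that swaps the $\delta_3/\tdelta_3$ at that slot, and then cycle back. The commutation of $\sigma_4$ with $\sigma_1$ and $\sigma_2$ provides considerable flexibility, since the middle block factors as $(\mu_1\mu_2\cdots\mu_{2k-2})\sigma_4^{2k-2}$, partially reducing the question to one inside the three-strand subgroup generated by $\sigma_1$ and $\sigma_2$. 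The main obstacle is precisely this step: by Lemma \ref{lem:orbite} the SC is a single cycling orbit, so the conjugators $c_i$ necessarily leave the SC (producing non-rigid SSS elements) and re-enter from elsewhere, and one must verify both that the intermediate conjugates never push the canonical length above $4k-1$ and that each bit can be flipped independently without side-effects on the other coordinates.
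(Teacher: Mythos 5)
Your proposal has two gaps, and the first one is fatal as written. The claim that $y(\epsilon)$ is in left normal form for every $\epsilon \in \{0,1\}^{2k-2}$ is false: the pair $(\delta_3\sigma_4)(\delta_3\sigma_4)$ is \emph{not} left-weighted, since $\sigma_2$ is a prefix of the second factor and $(\delta_3\sigma_4)\sigma_2 = \sigma_2\sigma_1\sigma_2\sigma_4 = \Delta_3\sigma_4$ is still simple; symmetrically, $(\tdelta_3\sigma_4)(\tdelta_3\sigma_4)$ fails via $\sigma_1$. In terms of starting and finishing sets, $\init{\delta_3\sigma_4}=\{\sigma_2,\sigma_4\}$ while $\final{\delta_3\sigma_4}=\{\sigma_1,\sigma_4\}$, so left-weighting at a middle junction holds precisely when $\mu_i \neq \mu_{i+1}$. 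Hence only the two alternating strings give the normal form you describe; for every other $\epsilon$ the factors merge under normalization ($\Delta_3$'s are created and migrate leftward), and you obtain neither the canonical length $4k-1$ nor $2^{2k-2}$ pairwise distinct elements. The second gap is the one you flag yourself: even for the $y(\epsilon)$ that survive, you never establish conjugacy to $\trbk$, and the bit-flipping strategy is only a sketch, with the hard verifications (control of the canonical length along the intermediate conjugates, independence of the flips) left open.

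The paper avoids both problems by reversing the logic: instead of writing down candidate normal forms and then trying to prove conjugacy, it conjugates $\trak$ by explicit positive $3$-braids $\atome{1}{i_1}\atome{i_1}{i_2}\cdots\atome{i_{2k-3}}{i_{2k-2}}$, where the atoms $\atome{i}{j}\in\{\sigma_1,\sigma_1\sigma_2,\sigma_2\sigma_1,\sigma_2\}$ are chained so that the finishing set of each matches the starting set of the next. Conjugacy is then automatic, and the only remaining work is to compute the left normal form of each conjugate, check that its canonical length is still $4k-1$ (which places it in the SSS, by Lemma \ref{lem:trbk}), and observe that distinct index sequences give distinct normal forms. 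Note that the chaining condition on the atoms is exactly what guarantees the left-weightedness that your uniform-in-$\epsilon$ claim lacks; to rescue your approach you would need to restrict to a family whose members are manifestly conjugate to $\trak$ and whose junctions satisfy such a compatibility condition, which is essentially what the paper's construction does.
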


\begin{proof} We have already seen in Section \ref{sec:pA} that the braid is pseudo-Anosov.

Let $\atome 1 1 = \sigma_1$, $\atome 1 2 = \sigma_1\sigma_2$, $\atome 2 1 = \sigma_2 \sigma_1$ and $\atome 2 2 = \sigma_2$, so that for all $i,j \in \{1,2\}$, $\init{\atome i j} = \{\sigma_i\}$ and $\final{\atome i j} = \{\sigma_j\}$, where $\init{x}$ is the set of the generators $\sigma_l$ such that $\sigma_l \preccurlyeq x$, and $\final x$ is the set of the $\sigma_l$ such that $x \succcurlyeq \sigma_l$. If we choose a sequence of elements $i_1, \ldots, i_{2k-2} \in \{1,2\}$, we denote
\[\tra{{k,i_1, \ldots, i_{2k-2}}} = (\atome 1{i_1} \atome{i_1}{i_2} \cdots \atome{i_{2k-3}}{i_{2k-2}})^{-1} \trak (\atome 1{i_1} \atome{i_1}{i_2} \cdots \atome{i_{2k-3}}{i_{2k-2}}).\]
On the one hand, the simple braids $\delta_3 \sigma_4 \sigma_3 \sigma_4$ and $\sigma_3\sigma_4\atome 1{i_1}$ are left-weighted, and this is also the case for $\sigma_3\sigma_4\atome 1{i_1}$ and $\sigma_4\atome {i_1}{i_2}$, for $\sigma_4\atome {i_1}{i_2}$ and $\sigma_4\atome {i_2}{i_3}$, etc. On the other hand, we can calculate the left normal form of $(\atome 1{i_1} \atome{i_1}{i_2} \cdots \atome{i_{2k-3}}{i_{2k-2}})^{-1}$, seen as a braid with $3$ strings: the left normal form of a braid can be easily expressed in terms of its inverse (see \cite{EM}). That of $(\atome 1{i_1} \atome{i_1}{i_2} \cdots \atome{i_{2k-3}}{i_{2k-2}})^{-1}$ in $\B_3$ is
\[\Delta_3^{-(2k-2)}\partial_3^{-(2(2k-2)+1)}(\atome{i_{2k-3}}{i_{2k-2}}) \cdots \partial_3^{-3}(\atome{i_1}{i_2})\partial_3^{-1}(\atome 1{i_1}),\]
where $\partial_3(x) = x^{-1}\Delta_3$ (and so $\partial_3^{-(2l+1)}(x) = \Delta_3^{l+1}x^{-1}\Delta_3^{-l}$). Moreover, $\partial^{-1}(\atome{1}{i_1}) \sigma_4 = \Delta_3 \atome1{i_1} \sigma_4$ and $\delta_3\sigma_4\sigma_3\sigma_4$ are left-weighted. From this, we deduce the left normal form of $\tra{k,i_1, \ldots, i_{2k-2}}$:
\begin{multline*}
\tra{k,i_1, \ldots, i_{2k-2}} = (\Delta_3\sigma_4)^{2}(\partial_3^{-(2(2k-2)+1)}(\atome{i_{2k-3}}{i_{2k-2}})\sigma_4) \cdots (\partial_3^{-3}(\atome{i_1}{i_2})\sigma_4)(\partial_3^{-1}(\atome 1{i_1})\sigma_4)\\ (\delta_3 \sigma_4 \sigma_3 \sigma_4)(\sigma_3\sigma_4\atome 1{i_1})(\sigma_4\atome{i_1}{i_2})\cdots (\sigma_4\atome{i_{2k-3}}{i_{2k-2}}).\end{multline*}
In particular, the canonical length of $\tra{k,i_1, \ldots, i_{2k-2}}$ is $4k-1$, and so this braid is in the super summit set. This normal form also allows us to observe that $\tra{k,i_1, \ldots, i_{2k-2}} = \tra{k,j_1, \ldots, j_{2k-2}}$ if and only if $i_1 = j_1, \ldots, i_{2k-2} = j_{2k-2}$. The $2^{2k-2}$ possible choices for $(i_1, \ldots, i_{2k-2})$ lead to $2^{2k-2}$ distinct elements in the super summit set of $\trak$.
\end{proof}

\end{document}